\documentclass[journal,twoside,web]{ieeecolor}
\usepackage[dvipsnames]{xcolor}
\usepackage{generic}
\usepackage{cite}
\usepackage{amsmath,amssymb,amsfonts}
\usepackage{graphicx}
\usepackage{textcomp}
\usepackage{subcaption}
\usepackage{lcsys}

\usepackage{amsthm}

\usepackage{tikz-cd}
\usetikzlibrary{arrows.meta}
\tikzcdset{
  arrow style=tikz,
  diagrams={>={Stealth[length=4pt,width=3.8pt]}}
}

\newtheorem{problemEnv}{Problem}

\newtheorem{remarkEnv}{Remark}
\newenvironment{remark}[1][]{\begin{remarkEnv}}{\hfill$\blacklozenge$\end{remarkEnv}}

\newtheorem{theoremEnv}{Theorem}

\newtheorem{lemmaEnv}{Lemma}

\newtheorem{corollaryEnv}{Corollary}

\newtheorem{propositionEnv}{Proposition}
\newenvironment{proposition}[1][]{\begin{propositionEnv}}{\hfill$\vartriangle$\end{propositionEnv}}

\newtheorem{definitionEnv}{Definition}
\newenvironment{definition}[1][]{\begin{definitionEnv}}{\hfill$\bullet$\end{definitionEnv}}

\newcommand{\dd}{\mathrm{d}}

\newcommand{\eqdef}{\triangleq}
\newcommand{\R}{\mathbb{R}}
\newcommand{\X}{\mathcal{X}}
\newcommand{\K}{\mathcal{K}}
\newcommand{\SO}[1]{\mathsf{SO}(#1)}
\newcommand{\so}[1]{\mathfrak{so}(#1)}
\DeclareMathOperator*{\argmin}{argmin}
\DeclareMathOperator{\spec}{spec}
\DeclareMathOperator{\Log}{Log}
\DeclareMathOperator{\Exp}{Exp}

\makeatletter
\let\NAT@parse\undefined
\makeatother
\usepackage[colorlinks=true,allcolors=blue]{hyperref}

\begin{document}

\def\BibTeX{{\rm B\kern-.05em{\sc i\kern-.025em b}\kern-.08em
    T\kern-.1667em\lower.7ex\hbox{E}\kern-.125emX}}
\markboth{\journalname, VOL. XX, NO. XX, XXXX 2017}
{Author \MakeLowercase{\textit{et al.}}: Preparation of Papers for IEEE Control Systems Letters (August 2022)}

\title{Learning Neural Maximal Lyapunov Functions on $\mathsf{SO}(n)$}

\author{Adeel Akhtar$^{a}$, 
Matthieu Barreau$^{b}$ 
\thanks{This work is partially supported by the Wallenberg AI, Autonomous Systems and Software Program (WASP) funded by the Knut and Alice Wallenberg Foundation and Digital Futures, and NJIT's startup funds.}
\thanks{$^{a}$ Department of Mechanical and Industrial Engineering,  New Jersey Institute of Technology, NJ, USA {\tt\small adeel.akhtar@njit.edu}.}
\thanks{$^{b}$ Department of Decision and Control Systems, Digital Futures, KTH Royal Institute of Technology, Stockholm, Sweden {\tt\small barreau@kth.se}. Both authors contributed equally.}%
}

\maketitle
\thispagestyle{empty}
 
\begin{abstract}
Establishing stability guarantees for dynamical systems on Lie groups is a fundamental challenge, as classical Lyapunov methods developed for Euclidean spaces do not directly transfer to curved geometries. In this paper, we propose a framework for learning maximal Lyapunov functions for systems evolving on the special orthogonal group $\mathsf{SO}(n)$. Theoretically, we introduce a neural Lyapunov architecture based on the logarithmic map with proven approximation capabilities, and we formulate the learning problem via a Zubov-type characterization of the maximal region of attraction. A key technical contribution is the derivation of explicit, numerically tractable formulas for the derivative of the logarithmic map, enabling training through a two-phase algorithm that balances computational efficiency and accuracy. Empirically, we validate the approach on a low-dimensional nonlinear system.
\end{abstract}

\begin{IEEEkeywords}
Lie groups, PINNs, Neural Lyapunov Functions, Learning.
\end{IEEEkeywords}

\section{Introduction}
\label{sec:introduction}
\IEEEPARstart{M}{any} engineering, biological, and physical systems evolve under structural geometric constraints that restrict their state to nonlinear manifolds rather than Euclidean spaces. Examples arise in rigid-body mechanics~\cite{MarRat1999}, in synchronization problems~\cite{ChaBanMah2025}, and in grand unified theories \cite{baez2010GUT}, to cite a few, where the system configuration is represented by a rotation matrix evolving on the special orthogonal group $\SO{n}$. Such systems are naturally modeled as dynamical systems on Lie groups, where the state in $\SO{n}$ evolves according to differential equations that preserve the orthogonality constraint. For these applications, establishing stability guarantees is crucial and is usually assessed using Lyapunov functions \cite{khalil2002nonlinear}.

Despite their central role in control theory, constructing Lyapunov functions for a generic nonlinear dynamical system remains a challenge. For linear time-invariant systems, global stability is equivalent to the existence of a quadratic Lyapunov function, computable via linear matrix
inequalities~\cite{boyd1994linear}. For nonlinear systems, an accurate Lyapunov function is generally not quadratic~\cite{khalil2002nonlinear}, and it
becomes even more important when estimating the largest region of attraction (ROA) around an equilibrium point. Zubov~\cite{zubov1961methods}, along with Vannelli and Vidyasagar~\cite{vannelli1985maximal} characterized the maximal ROA as the sublevel set of a \emph{maximal Lyapunov function} satisfying a certain partial differential equation (PDE), but lacked the computational means to approximate it.

More recently, polynomial parameterizations combined with sum-of-squares techniques~\cite{jones2021converse,henrion2013convex} and rational Lyapunov constructions~\cite{valmorbida2017region} have been proposed to solve this PDE, but their applicability is limited to low-dimensional systems with a specific structure. 
Residual-based neural methods, inspired by physics-informed learning~\cite{karniadakis2021physics}, have recently been used to approximate Lyapunov functions by enforcing Lyapunov or Zubov-type conditions at sampled collocation points~\cite{gaby2022lyapunov,liu2023physics,barreau2024supervised}. 
These approaches avoid fixed-grid discretizations of the state space, which makes them attractive for systems whose intrinsic dimension is too large for classical gridding-based methods.
The unsupervised setting was treated in~\cite{barreau2024supervised} with local guarantees.

Despite this progress, no existing work addresses the learning of a maximal Lyapunov function on $\SO{n}$. 
The difficulties in Euclidean settings are amplified on manifolds: classical constructions are not directly applicable, the geometry of the state space must be respected, and although $\SO{n}$ is embedded in $\R^{n \times n}$, it is a nonlinear manifold of intrinsic dimension $m_n=n(n-1)/2$ that can be locally parametrized by $m_n$ coordinates. Leveraging this intrinsic low-dimensional structure while preserving the group geometry is the central challenge we address in this work.

This paper makes two contributions. First, we introduce a neural Lyapunov architecture on $\SO{n}$, yielding positive-definite functions with universal approximation capabilities on the rotation group.  
Second, we derive explicit, numerically tractable expressions for the derivative of the logarithmic map and propose a two-phase training algorithm.

\noindent\textbf{Notation: } 
We write $\R_{\geq 0}$ for the non-negative real numbers, $I_n$ for the identity in $\R^{n\times n}$, $\spec(M)$ for the complex eigenvalues of a real square matrix $M$, $\|\cdot\|_{\R^n}$ for the standard Euclidean norm, and $\mathcal{C}^r(A,B)$ for the $r$-times continuously differentiable functions from $A$ to $B$. For smooth manifolds $M,N$ and a smooth map $F:M\to N$, the \emph{differential} at $x\in M$ is the linear map $\mathrm{d}F_x:T_xM\to T_{F(x)}N$. If $V:M\to\R$ is smooth and $M$ carries a Riemannian metric $\langle\cdot,\cdot\rangle_x$, the Riemannian gradient $\nabla V(x)\in T_xM$ is defined by $\mathrm{d}V_x[\xi]=\langle \nabla V(x),\xi\rangle_x$ for all $\xi\in T_xM$. For a differentiable map $\psi:\R^{m_n}\times\R^{m_n}\to\R^p$, $D_\omega\psi$ and $D_x\psi$ denote partial Jacobians with respect to $\omega$ and $x$. 

\section{Background and Problem Formulation}
\label{sec:background_problem}
\subsection{Neural Lyapunov Functions on $\R^n$}
\label{sec:backgroundNLF}
\label{sec:neural_lyapunov_function}
Consider a dynamical system
\begin{equation}
    \label{eq:dynamical_system_Rn}
    \dot{x}(t) = f(x(t)), \quad x(0) = x_0 \in \R^n,
\end{equation}
where $f:\R^n \to \R^n$ is Lipschitz continuous and $f(0) = 0$. We assume that there exists a unique forward-complete solution.
A possible neural network architecture for neural Lyapunov functions on $\R^n$ was introduced in \cite{gaby2022lyapunov} as
\begin{equation}
    \label{eq:V_gaby}
    V_{\theta}: \begin{array}[t]{ccl}
        \R^n & \to & \R_{\geq 0} \\
        x & \mapsto & \| \psi_{\theta}(x) - \psi_{\theta}(0) \|_{\R^p} + \alpha \| x \|_{\R^n},
    \end{array}
\end{equation}
where $\psi_{\theta}: \R^n \to \R^p$ is a traditional feedforward neural network, and $\alpha > 0$. 
This neural network architecture is inspired by Lyapunov theory \cite{khalil2002nonlinear}, which requires a Lyapunov function to be positive definite. The first term in~\eqref{eq:V_gaby} $x \mapsto \| \psi_{\theta}(x) - \psi_{\theta}(0) \|_{\R^p}$ is a positive semidefinite function. To ensure that $V_{\theta}$ is positive definite, a term $\alpha \| x \|_{\R^n}$ is added. As proven in \cite{gaby2022lyapunov}, this architecture is a universal approximation of any candidate Lyapunov function on any compact set included in $\R^n$ excluding a neighborhood around the origin.
\subsection{The special orthogonal group $\SO{n}$}
In this paper, we work on the special orthogonal group, defined as follows. For further background on Lie groups, we refer the reader to~\cite{BulLew2004}.

\begin{definition}
The special orthogonal group is defined as
\[
\SO{n} = \left\{ R \in \mathbb{R}^{n \times n} \;\middle|\; R^\top R = I_n, \; \det(R) = 1 \right\}.
\]
Under matrix multiplication, $\SO{n}$ forms a group with identity element $I_n$. 
Moreover, $\SO{n}$ has the structure of a smooth manifold and is therefore a Lie group.
\end{definition}

This manifold is commonly used for the rotation of 3D bodies ($n = 3$), for instance, and the reader can refer to \cite[Chap. 4]{BulLew2004} for some examples. A dynamical system defined on this manifold has the following structure:
\begin{equation}
    \label{eq:dynamical_system1}
    \dot{R}(t) = R(t) \Gamma(t), \quad R(0) = R_0 \in \mathsf{SO}(n),
\end{equation}
where $t \mapsto R(t)$ is the trajectory and $\Gamma = R^{\top} \dot{R}$ is the body velocity. This velocity belongs to the Lie algebra $\so{n}$, which is defined next. 

\begin{definition}
    The Lie algebra $\so{n}$ is defined as the tangent space at the identity element $I_n$ of $\SO{n}$, i.e.,
    \(
        \so{n} = \left\{ S \in \R^{n \times n} \;\middle|\; S^\top = -S \right\}.
    \)
    This set forms the real vector space of \emph{skew-symmetric} matrices and has dimension $m_n = \frac{n(n-1)}{2}$.
\end{definition}
The Lie algebra $\so{n}$ is isomorphic to $\R^{m_n}$ \cite[p.~252]{BulLew2004} and the isomorphism is realized by the \emph{hat} and \emph{vee} maps:
\begin{equation}
    \label{eq:hat_vee}
    \widehat{(\cdot)} : \R^{m_n} \rightarrow \so{n}, \quad \quad (\cdot)^\vee : \so{n} \rightarrow \R^{m_n}.
\end{equation}
Since $\SO{n}$ is a Lie group, it admits a left-invariant Riemannian metric~\cite{BulLew2004}. Under this metric, the inner product on each tangent space $T_R\SO{n}$ at $R$, denoted $\langle \cdot, \cdot \rangle_R$, is related to the inner product at identity $\langle \cdot, \cdot \rangle_{I_n}$. In particular, for $\xi_R, \eta_R \in T_R \SO{n}$ with $\xi_R = R\widehat\Omega_1$ and $\eta_R = R\widehat\Omega_2$, where $\widehat\Omega_1,\widehat\Omega_2 \in \so{n}$, the following property holds:
\begin{align}
\langle \xi_R, \eta_R \rangle_R
&= \langle R\widehat\Omega_1, R\widehat\Omega_2 \rangle_R \label{eq:bi-invariant1}\\
& =\langle R^\top  R\widehat\Omega_1, R^\top R\widehat\Omega_2 \rangle_{I_n} 
=
\langle \widehat\Omega_1, \widehat\Omega_2 \rangle_{I_n} \label{eq:bi-invariant2} \\
&= \langle {\Omega}_1, {\Omega}_2 \rangle_{\mathbb{R}^{m_n}} \label{eq:bi-invariant3}.
\end{align}
The inner products in~\eqref{eq:bi-invariant1},~\eqref{eq:bi-invariant2}, and~\eqref{eq:bi-invariant3}, are on $T_R\SO{n}$, $T_{I_n}\SO{n}$ (isomorphic to $\so{n}$), and $\R^{m_n}$,~respectively.

\subsection{Dynamically controlled systems on $\SO{n}$}
We consider a dynamically controlled system with configuration on $\SO{n}$ using the
left-trivialized representation of its tangent bundle. Since
$\so{n}=T_{I_n}\SO{n}$ is identified with $\R^{m_n}$ by the isomorphism
in~\eqref{eq:hat_vee}, each $x\in\R^{m_n}$ defines
$\widehat{x}\in\so{n}$ and hence the tangent vector $R\widehat{x}\in T_R\SO{n}$.
Thus $R\in\SO{n}$ is the configuration, $x$ is the body velocity, and,
under left trivialization, the tangent-bundle state is identified with
$(R,x)\in\SO{n}\times\R^{m_n}$. We consider
\begin{equation}
\label{eq:dynamical_system2}
    \left\{
        \begin{array}{ll}
            \dot{R}(t)=R(t)\widehat{x}(t), 
            & R(0)=R_0\in\SO{n},\\
            \dot{x}(t)=f(x(t),R(t)), 
            & x(0)=x_0\in\X,
        \end{array}
    \right.
\end{equation}
where $\X\subset\R^{m_n}$ is a compact connected domain containing the origin, and
$f:\SO{n}\times \X\to\R^{m_n}$. Here, $\widehat{x}(t)$ denotes the pointwise application of the hat map to $x(t)$.
We assume that there exists a unique forward-complete solution $\pi(\cdot, R_0, x_0)$ for any initial condition $(R_0, x_0) \in \K \triangleq \SO{n} \times \X$ \cite{BulLew2004}.

Without loss of generality, we assume that the dynamical system~\eqref{eq:dynamical_system2} has $e = \left(I_n,  0 \right)$ as an equilibrium point. The problem addressed in this paper is to characterize the maximal region of attraction around $e$ in $\K$ defined as:
\begin{equation*}
    \mathcal{R}^* = \left\{ (R_0, x_0)\in \K \ \Big| \begin{array}{l} \lim_{t \to \infty} \pi(t, R_0, x_0) = e, \\ \forall t \geq 0, \ \pi(t, R_0, x_0) \in \K \end{array} \right\}.
\end{equation*}

\section{Neural Lyapunov Functions on $\SO{n}$}
\label{sec:neural_lyapunov_son}
In this section, we first derive the neural architecture to model a positive-definite function on $\K$, then we discuss the conditions under which it is a local Lyapunov function for the dynamical system in \eqref{eq:dynamical_system2} around $e$. Finally, we present our learning problem.

\subsection{Neural Lyapunov function candidate}

The Lyapunov direct method for assessing asymptotic stability of the equilibrium point $e$ in $\K$ requires the existence of a candidate Lyapunov function defined as a positive definite function on $\K$ with $e$ as the origin.

Inspired by \eqref{eq:V_gaby}, let a candidate Lyapunov function on the compact set $\K$ be written as
\begin{equation}
    \label{eq:candidate_lyap}
    V: \begin{array}[t]{ccl}
        \K & \!\!\to\!\! & \R_{\geq 0} \\
        (R,x) & \!\!\mapsto\!\! & \left\| \Psi(R, x) - \Psi(e) \right\|_{\R^p}^2 + \ \alpha \| (R, x)\|_\K^2,
    \end{array}
\end{equation}
where $\Psi$ is continuously differentiable everywhere but at the origin and locally Lipschitz, $\alpha > 0$, and $\| (R, x) \|_\K^2 = \|R - I_n\|_F^2 + \|x\|_{\R^{m_n}}^2$. 
Compared with \eqref{eq:V_gaby}, this formulation considers a heterogeneous state on $\K$ and that the norms are now squared. This form has better approximation properties since $V$ is continuously differentiable.

On $\R^n$, $\Psi$ can be chosen as a feedforward neural network and can therefore approximate arbitrarily closely any locally Lipschitz function and its gradient on a compact set of $\R^n$ \cite{hornik1991approximation}. However, there is no such approximation result on $\SO{n}$ since it is not globally isomorphic to a Euclidean space.

However, we have seen in the previous section that $\so{n}$ is isomorphic to $\R^{m_n}$. We therefore  define the following set:
\begin{equation}\label{eq:SO_reg}
    \SO{n}_{\mathrm{reg}} \eqdef \left\{ R \in \SO{n} \ | \ \exists \; \omega \in \mathcal{D}_n, \ R = \Exp(\widehat{\omega}) \right\},
\end{equation}
where $\Exp: \so{n} \to \SO{n}$ is the matrix exponential and
\begin{equation}
\label{eq:D-n}
    \mathcal{D}_n = \left\{ \omega \in \R^{m_n} \ | \ \| \widehat{\omega} \|_2 < \pi \right\}.
\end{equation}
With this definition, we have $\SO{n}_{\mathrm{reg}} = \{ R \in \SO{n} \ | \ -1 \not\in \spec(R) \}$ which excludes rotations by exactly $\pi$, for which the logarithm is not unique~\cite{GallXu2003}. Similarly, we define \(
    \so{n}_{\mathrm{reg}} \triangleq
    \left\{ \widehat{\omega}\in\so{n} \mid \omega\in\mathcal{D}_n \right\}
\). Therefore, we can define the matrix logarithm $\Log$ as the inverse of the exponential on $\SO{n}_{\mathrm{reg}}$:
\[
    \forall R \in \SO{n}_{\mathrm{reg}}, \quad R = \Exp\left(\Log(R)\right).
\]

Consequently, the proposed logarithmic-map construction is well-defined on $\SO{n}_{\mathrm{reg}}$.
We can now use the map \textit{vee}, given in~\eqref{eq:hat_vee}, to construct a bijection between $\SO{n}_{\mathrm{reg}}$ and $\mathcal{D}_n$ as:
\begin{equation}
\label{eq:gamma_map}
    \forall R \in \SO{n}_{\mathrm{reg}}, \exists \; \omega \in \mathcal{D}_n, \quad \omega = \Log(R)^\vee = \gamma(R).
\end{equation}
The mappings $\Exp,\gamma$, $\widehat{(\cdot)}$, and  their inverse maps are summarized in Figure~\ref{fig:gamma}.
\begin{figure}
    \centering
\[
\begin{tikzcd}[
column sep=4.8em,
row sep=1.9em,
cells={nodes={scale=1.0}},
every arrow/.append style={line width=0.9pt},
every label/.append style={font=\normalsize}
]
\SO{n}_{\mathrm{reg}}
  \arrow[r, "\Log"]
  \arrow[dr, bend right=10, "\gamma"{above}]
&
\so{n}_{\mathrm{reg}}
  \arrow[d, "(\cdot)^\vee"]
\\
{} &
\mathcal{D}_n
  \arrow[dl, bend right=10, "\gamma^{-1}"{below}]
  \arrow[d, "\widehat{(\cdot)}"]
\\
\SO{n} &
\so{n}
  \arrow[l, "\Exp"]
\end{tikzcd}
\]
    \caption{Mapping between the different spaces. The top transformations are used for inference of $V_{\theta_1,\alpha}$, while the lower mappings are used for sampling in $\SO{n}$.}
    \label{fig:gamma}
    \vspace*{-0.5cm}
\end{figure}
We also define
\(
    \K_{\mathrm{reg}} \triangleq \SO{n}_{\mathrm{reg}}\times\X .
\)
This construction certifies $\mathcal R^*\cap\K_{\mathrm{reg}}$.
These maps enable the formulation of the following proposition on the approximation of the candidate Lyapunov function on $\K_{\mathrm{reg}}$. 

\begin{proposition}
\label{prop:Optimal_lyapunov}
    If $V^* \in \mathcal{C}^{2}(\K_{\mathrm{reg}}, \R)$ is a positive-definite function lower-bounded by $\beta \|\cdot\|_\K^2$ on $\K_{\mathrm{reg}}$ for $\beta > 0$ with $V^*(e) = 0$, then there exist $\Psi$ locally Lipschitz on $\K_{\mathrm{reg}}$ and $\alpha > 0$ such that $V^*$ can be written as \eqref{eq:candidate_lyap}. 
\end{proposition}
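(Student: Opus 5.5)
The plan is to build $\Psi$ explicitly from $V^*$ with a scalar output ($p=1$), using the lower bound to absorb the $\alpha$-term. Fix any $\alpha\in(0,\beta)$ and set
\[
W(R,x) \eqdef V^*(R,x) - \alpha\|(R,x)\|_\K^2 .
\]
By the hypothesis $V^*\geq \beta\|\cdot\|_\K^2$ we get $W \geq (\beta-\alpha)\|\cdot\|_\K^2 \geq 0$ on $\K_{\mathrm{reg}}$, and $W(e)=0$. We then take
\[
\Psi(R,x) \eqdef \sqrt{W(R,x)},
\]
so that $\Psi(e)=0$ and $\|\Psi(R,x)-\Psi(e)\|_{\R}^2 + \alpha\|(R,x)\|_\K^2 = W + \alpha\|\cdot\|_\K^2 = V^*$. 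This is exactly the form~\eqref{eq:candidate_lyap}. If a $p$-dimensional output is preferred, one can pad with zeros.

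It remains to check the regularity of $\Psi$. Away from $e$, $W$ is $\mathcal{C}^2$ and strictly positive, because $W\ge(\beta-\alpha)\|\cdot\|_\K^2>0$ there. Hence $\sqrt{W}$ is $\mathcal{C}^2$, in particular continuously differentiable and locally Lipschitz, on $\K_{\mathrm{reg}}\setminus\{e\}$. This matches the requirement that $\Psi$ be $\mathcal{C}^1$ everywhere but at the origin.

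The main obstacle is local Lipschitzness at $e$. Near $e$ we work in the chart $(R,x)\mapsto(\gamma(R),x)\in\mathcal{D}_n\times\X$, which is a diffeomorphism near $e$ since $\Exp$ is a local diffeomorphism at $0$. In these coordinates $W$ is $\mathcal{C}^2$, nonnegative, and vanishes at $0$, so its gradient vanishes there too. Taylor's theorem gives $W(z)\le C\|z\|^2$ on a small ball, and $\|(R,x)\|_\K$ is equivalent to $\|(\gamma(R),x)\|$ near $e$. For Lipschitzness of $\sqrt{W}$ we use the standard fact that a nonnegative $\mathcal{C}^2$ function $W$ with bounded Hessian satisfies $\|\nabla W\|^2\le 2\,\|\nabla^2 W\|_\infty\, W$ (Glaeser's inequality). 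Therefore $\|\nabla\sqrt W\| = \|\nabla W\|/(2\sqrt W) \le \sqrt{\|\nabla^2 W\|_\infty/2}$ on a neighborhood of $e$ minus $e$ itself. This uniform gradient bound, together with continuity of $\sqrt W$ at $e$, gives a local Lipschitz estimate on a convex coordinate ball around $e$.

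There is one technicality: Glaeser's inequality needs a slightly larger neighborhood on which $W\ge0$. This holds here because $W \ge 0$ on the open set $\K_{\mathrm{reg}}$, at least for interior points of $\X$. At boundary points of $\X$ we would either extend $V^*$ in a $\mathcal{C}^2$ way or only claim Lipschitzness on compact subsets of the interior. Combining the regularity away from $e$ with the estimate at $e$ yields a locally Lipschitz $\Psi$ on $\K_{\mathrm{reg}}$, which proves Proposition~\ref{prop:Optimal_lyapunov}.
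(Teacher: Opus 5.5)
Your proof is correct and uses the same construction as the paper: any $\alpha\in(0,\beta)$, $\Psi=\sqrt{V^*-\alpha\|\cdot\|_\K^2}$ padded with zeros, the chart $(\gamma(R),x)$, and smoothness of $\sqrt{W}$ wherever $W>0$. The difference is the key estimate at $e$.

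The paper bounds $\|\nabla\sqrt{g}\|=\|\nabla g\|/(2\sqrt{g})$ by combining two facts. First, $\|\nabla g(v)\|=O(\|v\|)$, which follows from $\nabla g(0)=0$ and $\mathcal{C}^2$ regularity. Second, $g(v)\ge c\|v\|^2$, which it draws from the hypothesis $V^*\ge\beta\|\cdot\|_\K^2$ with $\alpha<\beta$. You instead use Glaeser's inequality $\|\nabla W\|^2\le 2MW$, which needs only $W\ge 0$ and a Hessian bound.

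The trade-off is this. The paper's route uses the stated hypothesis directly, but its Lipschitz constant scales like $1/\sqrt{c}$, with $c$ proportional to $\beta-\alpha$. Your constant $\sqrt{M/2}$ does not degrade as $\alpha\to\beta$. Your route also makes clear that the quadratic lower bound is needed only to ensure $W\ge0$, and $W>0$ away from $e$.

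The technicality you flag is milder than you suggest, because Glaeser is invoked only near $e$. Since $\nabla W(0)=0$, you have $\|\nabla W(z)\|\le M\|z\|$. So for $z$ in a coordinate ball $B(0,r)$ with $W\ge 0$ on $B(0,2r)$, the descent point $z-\nabla W(z)/M$ stays in $B(0,2r)$, and that is all the inequality needs. The boundary of $\X$ plays no role in the Glaeser step. The one real assumption is that the origin lies in the interior of $\X$, and the paper needs it too, for $\nabla g(0)=0$.
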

\begin{proof}
    For any $0 < \alpha < \beta$, define $\tilde{V}(R,x) = V^*(R,x) - 
    \alpha \| (R,x) \|_{\K}^2 \geq (\beta - \alpha) \|(R,x) \|_{\K}^2 \geq 0$.
    Set $\psi_1(R,x) = \sqrt{\tilde{V}(R,x)}$ and $\psi_i \equiv 0$ for $i \in \{2, \dots, p\}$.
    Then for $\Psi = [\psi_1 \cdots \psi_p]^{\top}$, we get $V^* = V$ for $V$ as in 
    \eqref{eq:candidate_lyap}. It remains to show that $\psi_1 = \sqrt{\tilde{V}}$ is locally Lipschitz on $\K_{\mathrm{reg}}$. 
    Since $\K_{\mathrm{reg}}$ is a smooth manifold and every point admits a
coordinate neighborhood, it suffices to show Lipschitz continuity in local
coordinates around every point.
    Let $q \in \K_{\mathrm{reg}}$ and let $\Phi (R,x) = [\gamma(R)^{\top} \ x^{\top}]^{\top}$, which is a $C^1$-diffeomorphism from a neighborhood $\mathcal{N}$ of $q$ in $\K_{\mathrm{reg}}$ to an open subset of $\R^{m_n}\times\R^{m_n}$. Let $g = \tilde{V} \circ \Phi^{-1}$, which is $C^1$ on $\Phi(\mathcal{N})$.

    \textit{Away from $e$:} If $q \neq e$, let $\mathcal{N}$ be a neighborhood of $q$ not containing $e$. Then $g > 0$ on $\Phi(\mathcal{N})$, so $\sqrt{g}$ 
    is $C^1$ with Euclidean gradient $\nabla \sqrt{g} = \frac{\nabla g}{2\sqrt{g}}$, which is bounded on any compact subset of $\Phi(\mathcal{N})$. 

    \textit{Near $e$:} If $q = e$, then $g(0) = 0$ and, since $V^*(e) = 0$ is a minimum 
    of $V^* \in C^2$, we have $\nabla g(0) = 0$, so $\|\nabla g(v)\| = O(\|v\|)$ near 
    zero. The lower bound on $\tilde{V}$ gives $g(v) \geq c\|v\|^2$ for some $c > 0$, 
    hence $\sqrt{g(v)} \geq \sqrt{c}\|v\|$. Therefore,
    \begin{equation*}
        \|\nabla \sqrt{g}(v)\| = \frac{\|\nabla g(v)\|}{2\sqrt{g(v)}} 
        \leq \frac{O(\|v\|)}{2\sqrt{c}\,\|v\|} = O(1),
    \end{equation*}
    so $\sqrt{g}$ is locally Lipschitz at $0$ in $\R^{m_n}\times\R^{m_n}$.
\end{proof}

\begin{proposition}
\label{prop:Lyapun_P_log}
Let $\psi_\theta:\R^{m_n}\times\R^{m_n}\to\R^p$ be a neural network, and for $\alpha > 0$
\begin{equation}
\label{eq:P_lyapu}    
\mathcal{P}_{\theta,\alpha}(R,x)
=
\|\psi_\theta(\gamma(R), x) - \psi_\theta(0,0)\|^2
+
\alpha\|(R,x)\|_\K^2.
\end{equation}
If \eqref{eq:dynamical_system2} admits a local quadratic Lyapunov function, then any Lyapunov function $V^* \in \mathcal{C}^2(\K_{\mathrm{reg}}, \R)$ (and its gradient) can be approximated arbitrarily closely in the $L_{\infty}$ norm by $\mathcal{P}_{\theta,\alpha}$ (and its gradient) on any compact set $K \subset \K_{\mathrm{reg}}$.%
\end{proposition}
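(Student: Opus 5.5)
Our plan is to reduce the statement to Proposition~\ref{prop:Optimal_lyapunov} and a Euclidean universal approximation theorem for the feedforward network $\psi_\theta$ together with its derivatives~\cite{hornik1991approximation}. We transport everything to coordinates through $\Phi(R,x) = [\gamma(R)^\top\ x^\top]^\top$. Since $\gamma = (\cdot)^\vee\circ\Log$ is a smooth diffeomorphism from $\SO{n}_{\mathrm{reg}}$ onto the open set $\mathcal{D}_n$, the map $\Phi$ is a smooth diffeomorphism from $\K_{\mathrm{reg}}$ onto $\mathcal{D}_n\times\X$. The compact set $K$ is therefore sent to a compact set $\Phi(K)$, on which $D\Phi^{-1}$ and $D\Phi$ are bounded.

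\textbf{Step 1 (exact representation).} The assumption that \eqref{eq:dynamical_system2} admits a local quadratic Lyapunov function is what we use to obtain the lower bound $V^*\ge\beta\|\cdot\|_\K^2$ near $e$. We assume $V^*$ is comparable to it, or more simply use that $V^*\in\mathcal{C}^2$ has a positive definite Hessian at $e$. Away from $e$, $V^*$ is positive and continuous on the compact set $K$. Combining the two gives $\beta>0$ with $V^*\ge\beta\|\cdot\|_\K^2$ on $K$. Proposition~\ref{prop:Optimal_lyapunov} then gives $\alpha\in(0,\beta)$ and $\Psi=[\sqrt{\tilde V},0,\dots,0]^\top$ with $V^*=\|\Psi-\Psi(e)\|^2+\alpha\|\cdot\|_\K^2$ and $\Psi(e)=0$. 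Set $h=\Psi\circ\Phi^{-1}$ on $\Phi(K)$, so that $V^*\circ\Phi^{-1}(v)=\|h(v)-h(0)\|^2+\alpha\|\Phi^{-1}(v)\|_\K^2$. The $\alpha$-term appears identically in $\mathcal{P}_{\theta,\alpha}$ and cancels in the error. It remains to show that
\[
  v\mapsto\|\psi_\theta(v)-\psi_\theta(0)\|^2
\]
approximates $\|h(v)-h(0)\|^2$ uniformly on $\Phi(K)$, together with its gradient.

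\textbf{Step 2 (approximating the square directly).} The obstacle is that $h=\sqrt{g}$ is only locally Lipschitz at $0$, not $C^1$, so approximating $h$ in $C^1$ is not directly available. Instead we approximate $\tilde g=g\circ\Phi^{-1}$, which is $C^2$ with $\tilde g(0)=0$ and $\nabla\tilde g(0)=0$, by something of the form $\|\psi_\theta-\psi_\theta(0)\|^2$. We first try the following construction. Extend $\tilde g$ to a $C^2$ function on a compact neighbourhood, and write $\tilde g(v)=v^\top Q(v)v$ with $Q$ continuous and, after shrinking, positive definite. This is possible by the integral form of Taylor's theorem together with the quadratic lower bound, handled near $0$ and patched away from $0$. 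Then $\tilde g=\|Q(v)^{1/2}v\|^2$. Take $p=2m_n$ and $h_2(v)=Q(v)^{1/2}v$; this map is $C^1$ if $Q$ is, which needs $V^*\in C^3$ or a mollification of $Q$. Use~\cite{hornik1991approximation} to approximate $h_2$ in $C^1$ on $\Phi(K)$ by $\psi_\theta$. Since $h_2(0)=0$, the difference $\psi_\theta(v)-\psi_\theta(0)$ is $C^1$-close to $h_2$. Squaring a $C^1$-close bounded map gives $C^1$-close squares, because the products of bounded quantities are controlled.

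\textbf{Step 3 (back to the group).} Uniform closeness of values is unchanged under $\Phi$. For gradients, the Riemannian gradient on $\K_{\mathrm{reg}}$ satisfies $\nabla(\cdot)=D\Phi^{*}\nabla_v(\cdot\circ\Phi^{-1})$ with respect to the left-invariant metric \eqref{eq:bi-invariant1}. Because $D\Phi$ is bounded on $K$, the $L_\infty$ gradient error is bounded by a constant times the Euclidean error.

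The main difficulty is Step 2, namely the lack of regularity of $\sqrt{\tilde g}$ at the origin. If the factorization $\tilde g=\|h_2\|^2$ with $h_2\in C^1$ fails, we fall back on a cutoff. Use the quadratic Lyapunov function to represent $\tilde g$ as exactly quadratic-dominated inside a small ball, where $\|Pv\|$-type linear features are exact. Outside the ball, $\sqrt{\tilde g}$ is $C^1$ and approximable. Then glue the two pieces with a partition of unity, accepting an error that can be made arbitrarily small by shrinking the ball.
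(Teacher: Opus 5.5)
Your Step 1 has a genuine gap. Proposition~\ref{prop:Optimal_lyapunov} needs $V^*\ge\beta\|\cdot\|_\K^2$, and neither justification you offer is available.

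\begin{itemize}
\item The statement concerns an arbitrary $C^2$ Lyapunov function, whose Hessian at $e$ may be degenerate. If $V_q$ is the local quadratic Lyapunov function, then $V_q^2$ is also a $C^2$ Lyapunov function, and it vanishes to fourth order at $e$.
\item The existence of some quadratic Lyapunov function for the \emph{system} says nothing about how the given $V^*$ compares to it.
\end{itemize}

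For such a $V^*$, no exact representation of the form \eqref{eq:candidate_lyap} exists for any $\alpha>0$, because every function of that form is bounded below by $\alpha\|\cdot\|_\K^2$. Then $\tilde V=V^*-\alpha\|\cdot\|_\K^2$ is negative near $e$. Your Step 2 factorization, whose Taylor factor would be $Q(0)=\tfrac12\nabla^2\tilde g(0)$, therefore breaks down. So does your fallback, which again uses the quadratic Lyapunov function to describe $\tilde g$ near $0$.

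The missing idea, which is exactly how the paper uses the hypothesis, is to perturb rather than to bound. Set $V_\kappa=V^*+\kappa V_q$.
\begin{itemize}
\item On the compact set $K$, $V_q$ and its gradient are bounded, so $V_\kappa$ and its gradient converge uniformly to those of $V^*$ as $\kappa\to0$.
\item Since $V^*\ge0$, you get $V_\kappa\ge\kappa c\|\cdot\|_\K^2$ near $e$; positivity plus compactness give the bound away from $e$.
\item Apply Proposition~\ref{prop:Optimal_lyapunov} and your Steps 2--3 to $V_\kappa$ with $\alpha<\kappa c$. This means $\alpha$ must shrink with the target accuracy, which is allowed because $\alpha$ is a free parameter of $\mathcal{P}_{\theta,\alpha}$.
\end{itemize}

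Once this is repaired, your Step 2 is more careful than the paper's proof. The paper applies \cite{hornik1991approximation} directly to $\Psi=\sqrt{\tilde V}$, even though that map is only Lipschitz at $e$. To make your Step 2 complete:
\begin{itemize}
\item Do the patching at the level of $Q$, e.g.\ $\chi Q+(1-\chi)\,\tilde g\,\|v\|^{-2}I_{2m_n}$ with a cutoff $\chi$ supported where $Q$ is positive definite. This keeps $v^\top Qv=\tilde g$, since the expression is linear in $Q$, and keeps the patched matrix positive definite.
\item Get smoothness by mollifying $\tilde g$ itself and subtracting its affine part at $0$ before forming the Taylor factor.
\item Mollifying $Q$ directly also works. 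However, the convergence of $\nabla\big(v^\top Q_\epsilon(v)v\big)$ to $\nabla\tilde g$ rests on a cancellation (the term $v^\top\partial_iQ_\epsilon(v)\,v$ is not small on its own), which you would need to verify.
\end{itemize}
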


\begin{proof}
The positive-definiteness of $\mathcal{P}_{\theta,\alpha}$ follows directly from
\eqref{eq:P_lyapu} since both terms are nonnegative and the second one is positive definite.
We now clarify the approximation statement. Let
$K\subset\K_{\mathrm{reg}}$ be compact. 
Since $(R,x)\mapsto(\gamma(R),x)$ is smooth on $\K_{\mathrm{reg}}$, the image
of any compact set $K\subset\K_{\mathrm{reg}}$ is compact in
$\mathbb{R}^{2m_n}$.
Let $V_q$ be a local quadratic Lyapunov function around $e$. Then $V(R,x) = V^*(R,x) + \kappa V_q$ approximates $V^*$ arbitrarily closely in the $L_\infty$ norm on any compact set $K$ as $\kappa > 0$ is chosen sufficiently small, and is lower-bounded by a positive-definite quadratic function. 
By Proposition~\ref{prop:Optimal_lyapunov}, there exists $\Psi$ locally Lipschitz on $\K_{\mathrm{reg}}$ and continuously differentiable on $\K_{\mathrm{reg}}\setminus\{e\}$ such that \eqref{eq:candidate_lyap} holds.
This $\Psi$ and its gradient can be approximated arbitrarily closely in the $L_{\infty}$ norm on any compact set $K \subset \K_{\mathrm{reg}}$ by a feedforward neural network~\cite{hornik1991approximation}. 
This leads to the universal approximation of $V^*$.
\end{proof}
Assuming $f$ is continuously differentiable is sufficient for the existence of a $C^2$ Lyapunov function \cite{khalil2002nonlinear,Sideris2013}, and Proposition~\ref{prop:Lyapun_P_log} gives us a candidate Lyapunov function $V_{\theta_1,\alpha} = \mathcal{P}_{\theta_1,\alpha}$ on $\K_{\mathrm{reg}}$ with universal approximation capabilities. We will investigate how to transform it into a Lyapunov function. 

\subsection{Zubov characterization}

To establish the local asymptotic stability of the equilibrium point $e$, the Lyapunov direct method (such as~\cite[Theorem~6.45]{BulLew2004}) requires us to show that the Fréchet derivative of $t \mapsto V_{\theta_1,\alpha}(R(t),x(t))$ is negative definite along the trajectories of \eqref{eq:dynamical_system2}.
In a local neighborhood of $e$, this derivative must be strictly negative within the $1$-level set of $V_{\theta_1,\alpha}$, i.e.,
\begin{multline}
\label{eq:V-theta-alpha}
    \forall v_0 \in \K_{\mathrm{reg}} \setminus \{e\}, \\
    V_{\theta_1,\alpha}(v_0) < 1 \Rightarrow \forall t > 0, \ \frac{d}{dt} V_{\theta_1,\alpha}(\pi(t, v_0)) < 0.
\end{multline}

The Lyapunov direct method  enables us to conclude that the $1$-level set of $V_{\theta_1,\alpha}$ defined as 
\begin{equation}
\label{eq:ROA-set}
    \mathcal{R}(V_{\theta_1,\alpha}) = \left\{ (R,x) \in \K_{\mathrm{reg}} \ | \ V_{\theta_1,\alpha}(R, x) < 1 \right\}
\end{equation}
is a region of attraction around the equilibrium point $e$. 

In this paper, we are interested in estimating the maximal region of attraction $\mathcal{R}^*$. Zubov's theorem~\cite[Theorem~22]{zubov1961methods} provides a practical characterization of the maximal region of attraction as the $1$-level set of a maximal Lyapunov function $V^*$ defined as a solution to
\begin{multline}
    \label{eq:frechet_zubov}
    \forall v_0 \in \K_{\mathrm{reg}} \setminus \{e\}, V^*(v_0) < 1 \Rightarrow \ \forall t > 0, \\
     \ \frac{d}{dt} V^*(\pi(t, v_0)) = - \phi(\pi(t,v_0))\left(1 - V^*(\pi(t,v_0)) \right),
\end{multline}
where $\phi$ is a  positive definite function on $\K$. This formulation is still dependent on time and not just on the state. We need a way to compute the Fréchet derivative $t \mapsto \frac{d}{dt} V^*(\pi(t,v))$, defined at any point $v = (R,x) \in \K$. Let $(R_t, x_t) = \pi(t, R, x)$, and by definition of the gradient and using the chain rule, we get:
\begin{equation}
    \label{eq:dVdt}
    \hspace*{-0.3cm}
    \frac{d}{dt} V^*(R_t,x_t) \!\!\!
    \begin{array}[t]{l} 
       \  = \langle \nabla_R V^*(R_t,x_t), \dot{R}_t \rangle_{R_t} \\
       \hfill + \langle \nabla_x V^*(R_t,x_t), \dot{x}_t \rangle_{\R^{m_n}} \\
       \overset{\mathrm{by\,}\eqref{eq:dynamical_system2}}{=} 
       \langle \nabla_R V^*(R_t,x_t), R_t \widehat{x_t} \rangle_{R_t} \\
       \hfill + \ \langle \nabla_x V^*(R_t,x_t), f(x_t,R_t) \rangle_{\R^{m_n}} \\
       \overset{\mathrm{by\,}\eqref{eq:bi-invariant2}}{=} 
       \langle R_t^\top \nabla_R V^*(R_t,x_t), R_t^\top R_t\widehat{x_t} \rangle_{I_{n}} \\
       \hfill + \ \langle \nabla_x V^*(R_t,x_t), f(x_t,R_t) \rangle_{\R^{m_n}} \\
       \overset{\mathrm{by\,}\eqref{eq:bi-invariant3}}{=} 
       \langle \left(R_t^\top \nabla_R V^*(R_t,x_t)\right)^{\vee}, x_t \rangle_{\R^{m_n}} \\
       \hfill + \ \langle \nabla_x V^*(R_t,x_t), f(x_t,R_t) \rangle_{\R^{m_n}}.
    \end{array}
\end{equation}
The above expression allows us to compute the Fr\'echet derivative using the gradient information, and to transform the previous problem~\eqref{eq:frechet_zubov} into a time-independent equivalent problem, as stated below.
\begin{proposition}
    If there exists a positive definite function $\phi$ on $\K$ such that $V^*$ is a positive definite solution to
    \begin{multline}
        \label{eq:zubov}
        \langle \left(R^\top \nabla_R V^*(R,x)\right)^{\vee}, x \rangle_{\R^{m_n}} \\
        + \langle \nabla_x V^*(R,x), f(x,R) \rangle_{\R^{m_n}} = \\ 
        -\phi(R, x)\big(1-V^*(R, x)\big),
    \end{multline}
    then $\mathcal{R}^* = \mathcal{R}(V^*)$.
\end{proposition}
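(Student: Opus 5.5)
The plan is to reduce the time-independent PDE \eqref{eq:zubov} to the trajectory identity \eqref{eq:frechet_zubov} via the chain-rule computation \eqref{eq:dVdt}. Then we follow Zubov's classical argument: show the two inclusions $\mathcal{R}(V^*)\subseteq\mathcal{R}^*$ and $\mathcal{R}^*\subseteq\mathcal{R}(V^*)$ separately.

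\textbf{Step 1.} Along any solution $(R_t,x_t)=\pi(t,R_0,x_0)$ that stays in $\K_{\mathrm{reg}}$, \eqref{eq:dVdt} turns the left-hand side of \eqref{eq:zubov} into $\frac{d}{dt}V^*(R_t,x_t)$. Setting $W(t)=1-V^*(R_t,x_t)$, we get the scalar linear ODE
\[
\dot W(t)=\phi(R_t,x_t)\,W(t),
\]
so that
\[
W(t)=W(0)\exp\Bigl(\int_0^t\phi(R_s,x_s)\,ds\Bigr).
\]
This has two consequences. First, the sign of $1-V^*$ is invariant along trajectories. Second, the level set $\{V^*=1\}$ is invariant, so $\mathcal{R}(V^*)$ is forward invariant. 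If $V^*(v_0)<1$, then $0<W(t)\le 1$ for all $t$, hence $V^*(\pi(t,v_0))\in[0,1)$. The exponential factor is bounded by $1/W(0)$, which forces $\int_0^\infty\phi(\pi(s,v_0))\,ds<\infty$.

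\textbf{Step 2} ($\mathcal{R}(V^*)\subseteq\mathcal{R}^*$). Let $V^*(v_0)<1$. Forward invariance and the fact that $V^*$ is defined on $\K$ keep the trajectory in $\K$. Moreover, $V^*$ is nonincreasing along it, since $\dot V^*=-\phi(1-V^*)\le 0$. The trajectory therefore lies in a compact sublevel set, and by LaSalle's invariance principle (or directly) its $\omega$-limit set is contained in $\{\phi=0\}\cap\K=\{e\}$. This uses finiteness of $\int_0^\infty\phi$ together with uniform continuity of $\phi\circ\pi$ on the compact positive orbit closure. Hence $\pi(t,v_0)\to e$.

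\textbf{Step 3} ($\mathcal{R}^*\subseteq\mathcal{R}(V^*)$). Let $v_0\in\mathcal{R}^*$ and suppose, for contradiction, that $V^*(v_0)\ge 1$. By Step 1, $V^*(\pi(t,v_0))\ge 1$ for all $t$. But $\pi(t,v_0)\to e$, so continuity of $V^*$ gives $V^*(\pi(t,v_0))\to V^*(e)=0$, a contradiction.

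\textbf{Main obstacle.} The main difficulty is the geometric bookkeeping rather than the ODE argument. The PDE is stated on $\K$, whereas $V^*$ and the logarithm chart are naturally on $\K_{\mathrm{reg}}$. Trajectories may also touch $\partial\X$ or rotations by $\pi$. I would handle this by reading $V^*$ as defined and $\mathcal{C}^1$ on all of $\K$, which the hypothesis implicitly requires for the gradients in \eqref{eq:zubov} to exist. Under that reading, Steps 1 to 3 use only the intrinsic chain rule \eqref{eq:dVdt} and never the chart $\gamma$.

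A second subtle point is Step 2: $\phi>0$ away from $e$ must force convergence rather than mere boundedness. Compactness of $\K$ and continuity of $\phi$ settle it: if the trajectory stayed outside a neighbourhood of $e$ infinitely often on intervals of uniform length, then $\int\phi=\infty$.
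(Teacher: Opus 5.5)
Your proof is correct and takes essentially the paper's route. The paper only says to turn \eqref{eq:zubov} into the trajectory identity via \eqref{eq:dVdt} and then adapt the Zubov/Vannelli--Vidyasagar argument to $\K$; your Steps 1--3 carry this out explicitly, reading $V^*$ on all of $\K$, so the conclusion is $\{v\in\K : V^*(v)<1\}=\mathcal{R}^*$, i.e.\ $\mathcal{R}(V^*)=\mathcal{R}^*\cap\K_{\mathrm{reg}}$ with the paper's definition of $\mathcal{R}(\cdot)$.

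One justification in Step 2 should be repaired. The forward invariance of $\{V^*<1\}$ cannot be what keeps the trajectory in $\K$, because Step 1 derived that invariance only for trajectories that stay where \eqref{eq:zubov} holds, so the reasoning is circular. Instead, invoke the standing assumption that solutions from $\K$ are forward complete while $f$ is defined only on $\K$, which makes $\K$ forward invariant. If exits through $\SO{n}\times\partial\X$ were possible, the statement itself would need an extra boundary condition such as $V^*\ge 1$ on $\SO{n}\times\partial\X$.
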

The complete proof of this proposition is an adaptation of \cite[Corollary~3.1]{vannelli1985maximal} on $\K\subset\SO{n}\times\R^{m_n}$, using the time derivative of the Lyapunov function in \eqref{eq:dVdt}.

This formulation characterizes the maximal Lyapunov function $V^*$.
In practice, we aim to approximate $V^*$ by the candidate Lyapunov
function $V_{\theta_1,\alpha}$ in~\eqref{eq:V-theta-alpha} through a learning procedure as described below.

\subsection{Learning formulation}

To estimate the maximal region of attraction $\mathcal{R}^*$, we will first approximate a maximal Lyapunov function $V^*$ by $V_{\theta_1,\alpha} = \mathcal{P}_{\theta_1,\alpha}$ and $\phi$ by $\phi_{\theta_2} = \mathcal{P}_{\theta_2,\beta}$, where $\theta_1, \theta_2$, and $\alpha$ are training variables, while $\beta$ is fixed to a small positive value. If $V_{\theta_1,\alpha}$ is close to a $V^*$ and is a Lyapunov function for \eqref{eq:dynamical_system2}, then $\mathcal{R}(V_{\theta_1,\alpha})$ in~\eqref{eq:ROA-set} is an inner approximation of the maximal region of attraction $\mathcal{R}^*$.

Learning a maximal Lyapunov function on a Euclidean space has been investigated in \cite{liu2023physics,barreau2024supervised}. We will follow the same procedure and define a learning problem that uses the neural architecture proposed previously while enforcing \eqref{eq:zubov} on $\K_{\mathrm{reg}}$. 
We define a quantity, called the residual, which depends on the parameters $\theta_1, \theta_2, \alpha$, and~$\beta$ as follows:
\begin{multline*}
    r(R,x) \triangleq
    \left\langle
    \left(R^\top\nabla_R V_{\theta_1,\alpha}(R,x)\right)^\vee,
    x
    \right\rangle_{\R^{m_n}} \\
    +
    \left\langle
    \nabla_x V_{\theta_1,\alpha}(R,x),
    f(x,R)
    \right\rangle_{\R^{m_n}} \\
    +
    \phi_{\theta_2,\beta}(R,x)
    \big(1-V_{\theta_1,\alpha}(R,x)\big).
\end{multline*}
To estimate a maximal Lyapunov function $V^*$, the constraint \eqref{eq:zubov} will be softly enforced through $r = 0$ on a discrete subset of $\K_{\mathrm{reg}}$. While the weaker condition $r\leq 0$ would suffice to certify a Lyapunov function on a sublevel set, the equality $r=0$ is used here because Zubov's characterization of the maximal Lyapunov function is an equality. To that extent, we define $\mathcal{D}(N) \subset \mathcal{D}_n \times \X$ as $N$ uniformly sampled points from $\mathcal{D}_n \times \X$ in~\eqref{eq:D-n}, and therefore 
\begin{equation}
    \label{eq:D_N}
    \tilde{\mathcal{D}}(N) = \left\{ (\gamma^{-1}(\omega), x) \ | \ (\omega,x) \in \mathcal{D}(N) \right\}.
\end{equation}
Using this definition, the learning problem is written as
\begin{equation}
    \label{eq:learning}
    (\theta_1^*(\tilde{\mathcal{D}}(N)), \alpha^*) \in \argmin_{\theta_1,\alpha} \min_{\theta_2} \frac{1}{N} \sum_{(R, x) \in \tilde{\mathcal{D}}(N)} \| r(R, x) \|^2.
\end{equation}
If the residual $r$ converges to zero
for sufficiently dense samples of $\K_{\mathrm{reg}}$,
the learned function $V_{\theta_1,\alpha}$ approximates a maximal
Lyapunov function and the associated level set
$\mathcal{R}(V_{\theta_1,\alpha})$ provides an estimate of the maximal region of attraction. It is, however, difficult to compute the loss because the gradient of $V_{\theta_1,\alpha}$ has to be made explicit first.

In classical physics-informed learning scenarios, this gradient is computed through automatic differentiation \cite{baydin2018automatic}. However, in our case, it requires evaluating the gradient of the logarithmic map. Since this operation is not directly implemented in classical automatic differentiation libraries, we derive explicit formulas for the derivative of the logarithmic map~\eqref{eq:gamma_map} in the next section.

\section{Derivative of the Logarithmic Map}
\label{sec:log_derivative}

For $(R, x) \in \K_{\mathrm{reg}}$, let $(R(t), x(t)) = \pi(t, R, x)$ and $X(t) = \Log(R(t))$, which implies $R(t) = \Exp(X(t))$. The automatic differentiation framework requires the quantity (explained below in~\eqref{eq:G}) that gives the changes in $X$ due to a small variation in $R$. Consider the curve
\(
R_\delta \triangleq R\Exp(\delta dR)\), where \(dR \in \so{n}\). Then
\begin{equation}
\label{eq:R_del_dot}
\dot R_\delta\big|_{\delta=0}
=
R\frac{d}{d\delta}\Exp(\delta dR)\big|_{\delta=0}
=
R\,dR
\ \in T_R\SO{n}.
\end{equation}
Let \(X(\delta)\triangleq\Log(R_\delta)\). By definition of the directional derivative, we can write the left-trivialized variation
\begin{equation} 
    \label{eq:G}
    G^\ell(R,dR) \begin{array}[t]{l}
        \displaystyle\triangleq \lim_{\delta\to 0} \frac{\Log(R\Exp(\delta dR))-\Log(R)}{\delta} \\ 
        = \left.\frac{d}{d\delta}\right|_{\delta=0} X(\delta) = R^{\top} G(R,dR),
    \end{array}
\end{equation}
where the superscript $\ell$ denotes left trivialization.
The quantity $G$ is the one needed in TensorFlow to allow for the computation of $\nabla V_{\theta_1,\alpha}$.

\subsection{Derivative of the logarithmic map}
The derivative of the $\Log$ map along trajectories of
\eqref{eq:dynamical_system2} is given by the following result.

\begin{proposition}
\label{prop:log_derivative}
Let $t \mapsto (R(t), x(t))$ be a trajectory of the system defined in \eqref{eq:dynamical_system2} and $X(t)=\Log(R(t))$.
Then
\begin{equation}
\dot X(t) = (\dd^\ell\Exp_{X(t)})^{-1}[\widehat{x}(t)],
\label{eq:Xdot}
\end{equation}
where the left-trivialized differential of the exponential map $\dd^\ell\Exp_X : \so{n} \to \so{n}$ is
\[
\dd^\ell\Exp_X[\Delta]
=
\int_0^1 e^{-sX}\Delta e^{sX}ds, \qquad
\Delta \in \so{n}.
\vspace*{-0.85cm}
\]
\end{proposition}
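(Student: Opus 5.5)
The plan is to differentiate the identity $R(t)=\Exp(X(t))$ in time and then compare the result with the kinematics $\dot R = R\widehat{x}$ from \eqref{eq:dynamical_system2}. Since $R(t)\in\SO{n}_{\mathrm{reg}}$ on the time interval considered, $X(t)=\Log(R(t))$ is well defined. Moreover, $\Log$ is smooth there, being the inverse of $\Exp$ restricted to $\so{n}_{\mathrm{reg}}$, on which $\Exp$ is a local diffeomorphism. Hence $t\mapsto X(t)$ is differentiable. This smoothness is exactly what the invertibility of $\dd^\ell\Exp_X$ will give, so I will establish invertibility first and then invoke the inverse function theorem.

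\textbf{Step 1 (derivative of the exponential).} For a differentiable curve $X(t)\in\so{n}$, I would use the classical Duhamel/Wilcox formula
\[
\frac{d}{dt}e^{X(t)} = \int_0^1 e^{(1-s)X}\dot X e^{sX}\,ds .
\]
To prove it, set $F(s)=e^{-sX}\frac{\partial}{\partial t}e^{sX}$. Then $\partial_s F(s)= e^{-sX}\dot X e^{sX}$, using $\partial_t\partial_s e^{sX}=\partial_t(Xe^{sX})$ and the fact that $X$ commutes with $e^{sX}$. Since $F(0)=0$, integrating from $0$ to $1$ and multiplying on the left by $e^{X}$ gives $\dot R = R\int_0^1 e^{-sX}\dot X e^{sX}ds = R\,\dd^\ell\Exp_X[\dot X]$. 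The integrand stays in $\so{n}$ because $\mathrm{Ad}_{e^{-sX}}$ preserves skew-symmetry. So $\dd^\ell\Exp_X$ is indeed a linear map $\so{n}\to\so{n}$.

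\textbf{Step 2 (identification).} From \eqref{eq:dynamical_system2}, $\dot R = R\widehat{x}$. Since $R$ is invertible, Step 1 gives $\dd^\ell\Exp_{X(t)}[\dot X(t)] = \widehat{x}(t)$. It remains to invert $\dd^\ell\Exp_{X}$.

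\textbf{Step 3 (invertibility, the main point).} I write $\dd^\ell\Exp_X=\int_0^1 e^{-s\,\mathrm{ad}_X}ds = \frac{I-e^{-\mathrm{ad}_X}}{\mathrm{ad}_X}$ as an operator on $\so{n}$, where $\mathrm{ad}_X[\Delta]=X\Delta-\Delta X$. Because $X$ is real skew-symmetric, its eigenvalues are $\pm i\theta_j$ with $|\theta_j|\le \|X\|_2<\pi$ on $\so{n}_{\mathrm{reg}}$. Moreover, $\mathrm{ad}_X$ is diagonalizable over $\mathbb{C}$ (it is skew-adjoint for the Frobenius inner product), with eigenvalues $i(\theta_j-\theta_k)$, so these satisfy $|\theta_j-\theta_k|<2\pi$. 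The operator $\dd^\ell\Exp_X$ therefore has eigenvalues $(1-e^{-\lambda})/\lambda$ for $\lambda\neq 0$ and $1$ for $\lambda=0$. Such a value vanishes only if $\lambda\in 2\pi i\mathbb{Z}\setminus\{0\}$, which the bound $|\lambda|<2\pi$ excludes. Hence $\dd^\ell\Exp_X$ is invertible, which yields \eqref{eq:Xdot}.

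I expect this spectral step to be the only delicate part. The care needed is in correctly relating the bound $\|\widehat\omega\|_2<\pi$ in \eqref{eq:D-n} to the spectrum of $\mathrm{ad}_X$, and in checking that the spectral argument applies to $\mathrm{ad}_X$ viewed as an operator on $\so{n}$ rather than on all of $\R^{n\times n}$. Diagonalizability settles the latter, since $\so{n}$ is an invariant subspace of $\mathrm{ad}_X$.
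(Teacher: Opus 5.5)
Your proposal is correct and follows the same route as the paper: differentiate $R(t)=\Exp(X(t))$, use $\dd\Exp_X[\Delta]=\Exp(X)\,\dd^\ell\Exp_X[\Delta]$, match the result with $\dot R=R\widehat{x}$, and invert $\dd^\ell\Exp_X$. The difference is that you prove in-line the identity (via the Duhamel formula) and the non-singularity (via the bound $|\lambda|<2\pi$ on the spectrum of $\mathrm{ad}_X$), which the paper takes from \cite[Prop.~20.1]{GalQua2020}; you also justify the differentiability of $t\mapsto X(t)$, which the paper leaves implicit.
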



\begin{proof}
Differentiating the identity $R(t)=\Exp(X(t))$ yields
\[
\dot R(t)=\dd\Exp_{X(t)}[\dot X(t)].
\]
From \cite[Prop.~20.1]{GalQua2020}, we get that $\dd^\ell\Exp_{X(t)}$ is non-singular and we can use the identity
\(
\dd\Exp_X[\Delta]=\Exp(X)\dd^\ell\Exp_X[\Delta].
\)
Together with the system dynamics \eqref{eq:dynamical_system2}, it gives
\[
\Exp(X(t))\dd^\ell\Exp_{X(t)}[\dot X(t)]
=
\Exp(X(t))\widehat{x}(t).
\]
Premultiplying by $\Exp(-X(t))$ yields the result.
\end{proof}
Since \(R_\delta=\Exp(X(\delta))\), it follows from Proposition~\ref{prop:log_derivative} that
\(
\dot X(\delta)
=
(\dd^\ell\Exp_{X(\delta)})^{-1}[R_\delta^{-1}\dot R_\delta].
\)
Evaluating at \(\delta=0\), and using \eqref{eq:R_del_dot}-\eqref{eq:G} yields
\[
G(R,dR)
=
R (\dd^\ell\Exp_{\Log(R)})^{-1}[dR]
\in T_R\SO{n}.
\]
Using \eqref{eq:Xdot}, the left-trivialized $R$-gradient appearing in the
residual can be written explicitly. Let $\omega=\gamma(R)$,
$X=\Log(R)$, and
$z_{\theta_1}(\omega,x)=\psi_{\theta_1}(\omega,x)-\psi_{\theta_1}(0,0)$.
Define
\[
J_\gamma^\ell(R)\eta
=
\left((\dd^\ell\Exp_X)^{-1}[\widehat\eta]\right)^\vee,
\qquad \eta\in\mathbb{R}^{m_n}.
\]
Then
\begin{equation*}
\begin{aligned}
    \left(R^\top\nabla_R V_{\theta_1,\alpha}(R,x)\right)^\vee
    &=
    \left(J_\gamma^\ell(R)\right)^\top
    \nabla_\omega
    \Big(
        \|z_{\theta_1}(\omega,x)\|^2 \\
        &\qquad\qquad
        +\alpha\|\Exp(\widehat\omega)-I_n\|_F^2
    \Big),
\end{aligned}
\end{equation*}
where
$\nabla_\omega\|z_{\theta_1}(\omega,x)\|^2
=
2D_\omega\psi_{\theta_1}(\omega,x)^\top z_{\theta_1}(\omega,x)$.
Similarly,
$\nabla_x V_{\theta_1,\alpha}(R,x)
=
2D_x\psi_{\theta_1}(\omega,x)^\top z_{\theta_1}(\omega,x)+2\alpha x$.

\subsection{Numerical evaluation}

Computing $(\dd^\ell\Exp_{X(t)})^{-1}[dR]$ naively requires discretizing an integral and inverting it. This is inefficient and too inaccurate for a learning problem. However, there are other strategies that yield approximations of any order and exact computation, as described below~\cite{GalQua2020}.

\textbf{Series approximation.}
Using the adjoint representation $\mathrm{ad}_X[\Delta]=[X,\Delta]$, where $[\cdot,\cdot]$ denotes the Lie bracket,  the inverse operator admits the Bernoulli series expansion
\begin{multline*}
    (\dd^\ell\Exp_X)^{-1}[dR]
=
dR
+\frac12[X,dR]\\
+
\sum_{m=1}^{\infty}
\frac{B_{2m}}{(2m)!}(\mathrm{ad}_X)^{2m}[dR],
\end{multline*}
where $B_{2m}$ are the even Bernoulli numbers. Since $R\in\SO{n}_{\mathrm{reg}}$, the spectrum of $\mathrm{ad}_X$ excludes $\pm i2k\pi$, ensuring absolute convergence of the series. In practice, truncating the series provides a fast approximation.
Each $\mathrm{ad}_X$ evaluation requires two $n\times n$ matrix
multiplications, i.e., $\mathcal{O}(n^3)$ operations.
Truncating the series after $M$ terms, therefore, yields complexity
$\mathcal{O}(Mn^3)$, which is $\mathcal{O}(n^3)$ in practice since only a few terms are typically required.

\begin{remark}
Note that, due to numerical roundoff, the truncated approximation of 
    $(\dd^\ell\Exp_X)^{-1}[dR]$ may fail to be exactly skew-symmetric. 
    In the code, we enforce skew-symmetry in the approximation.
\end{remark}

\textbf{Exact computation.}
An exact evaluation can be obtained using the block matrix identity
\[
\Exp\left(
\begin{bmatrix}
X & \Delta \\
0 & X
\end{bmatrix}
\right)
=
\begin{bmatrix}
e^X & e^X\dd^\ell\Exp_X[\Delta]\\
0 & e^X
\end{bmatrix}.
\]
This relation allows the construction of the linear operator
$\dd^\ell\Exp_X$ in a chosen basis of $\so{n}$ and the exact solution of
$\dd^\ell\Exp_X[\Delta]=Y$.
Specifically, to compute $(\dd^\ell\Exp_X)^{-1}[Y]$, let
$\{E_i\}_{i=1}^{m_n}$ be a basis of $\so{n}$. For each $j$, compute
\[
P_j=\Exp\!\left(
\begin{bmatrix}
X & E_j\\
0 & X
\end{bmatrix}
\right),
\]
extract the top-right block $B_j$, and set
$V_j=e^{-X}B_j=\dd^\ell\Exp_X[E_j]$.
Express $V_j=\sum_{i=1}^{m_n}J_{ij}E_i$ to form the Jacobian
$J\in\R^{m_n\times m_n}$.
Let $y$ denote the coordinate vector of
$Y=\sum_i y_iE_i$.
Solving $J\delta=y$ yields
\[
(\dd^\ell\Exp_X)^{-1}[Y]
=
\sum_{i=1}^{m_n}\delta_iE_i .
\]
The dominant cost is the computation of $m_n$ block matrix exponentials of size $2n\times2n$.
Using scaling--squaring~\cite{HigNicm2008}, each exponential requires
$\mathcal{O}(n^3)$ operations, giving overall complexity
$\mathcal{O}(m_n n^3)=\mathcal{O}(n^5)$. The system $J\delta=y$ can be solved in
$\mathcal{O}(m_n^3)$ time, which is negligible for the small dimensions typical of
$\so{n}$. The network input is intrinsic, $(\gamma(R),x)\in\mathbb R^{2m_n}$,
rather than an ambient representation of dimension $n^2+m_n$.
\subsection{Final learning algorithm}

Since the loss is now made numerically computable, a solution to \eqref{eq:learning} can be approximated via a gradient-descent scheme with frequent resampling to avoid a sub-optimal $\theta_1^*$ \cite{barreau2025control}.
In the proposed learning algorithm, the series approximation is used during early training iterations to reduce computational cost, while the exact method is employed during the final phase.

\section{Simulation Results}
\label{sec:simulation_results}

We use the proposed methodology to learn an approximation of the maximal region of attraction of the dynamical system introduced in~\eqref{eq:dynamical_system2} with $n = 2$, $m_n = 1$, and $f(x, R) = -x + x^2 - 2(R - R^{\top})^\vee$. If $x = - 2(R - R^{\top})^\vee$, the dynamics on $R$ have $I_2$ as an almost-globally asymptotically stable equilibrium point on $\SO{2}_{\mathrm{reg}}$ \cite{AkhSan2022}, so the coupled system is chosen to have a locally asymptotically stable origin $e$. The code was implemented in Python with the TensorFlow library \cite{abadi2016tensorflow}, and it is available on GitHub\footnote{\texttt{https://github.com/mBarreau/LyapunovSO}}.
\begin{figure}
     \centering
     \begin{subfigure}[b]{0.4\textwidth}
         \centering
         \includegraphics[width=\textwidth]{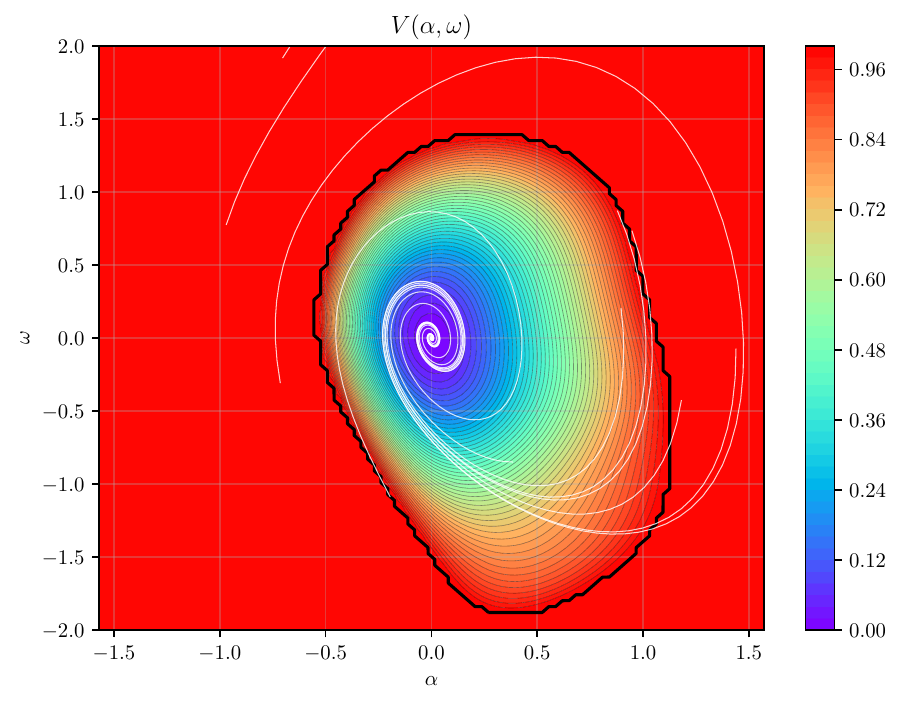}
         \caption{$V_{\theta_1^*,\alpha^*}$}
         \label{fig:V_SO2}
     \end{subfigure}
     \vspace{0.5em}
\begin{subfigure}[b]{0.4\textwidth}
         \centering \includegraphics[width=\textwidth]{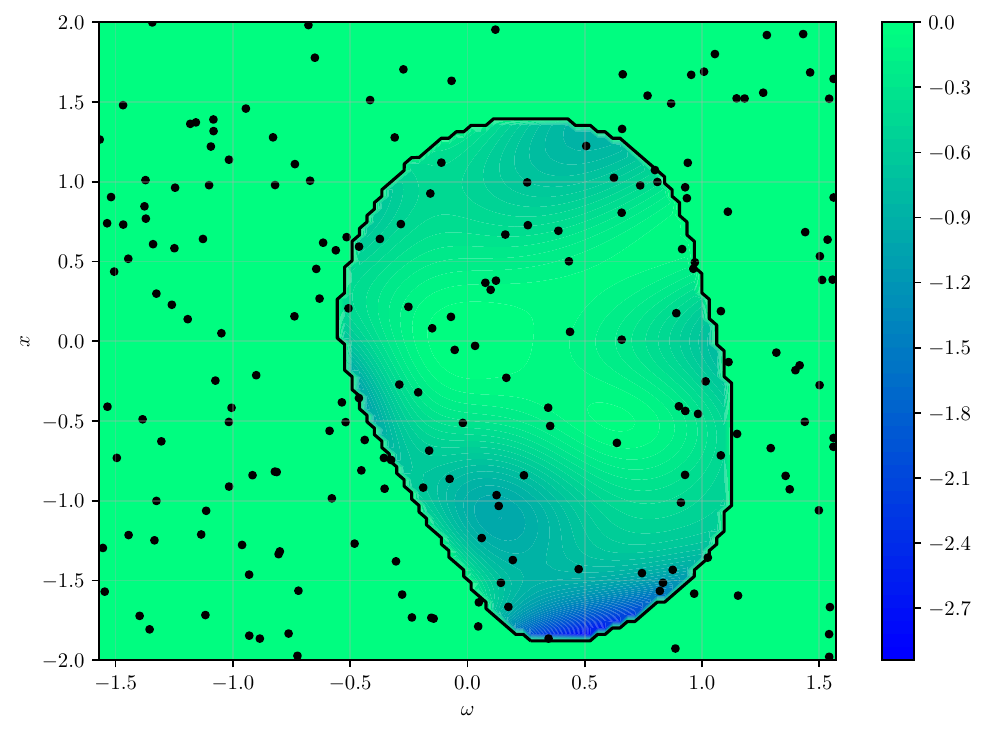}
         \caption{$\dot{V}_{\theta_1^*,\alpha^*}$}
         \label{fig:Vdot_SO2}
     \end{subfigure}
\caption{Learned Lyapunov function and its Fr\'echet derivative.} 
    \label{fig:SO2}
    \vspace*{-0.5cm}
\end{figure}
With $\phi$ fixed as a quadratic function (to simplify the computations), we get the Lyapunov function in Figure~\ref{fig:V_SO2} where $\omega = \gamma(R) \in [-\pi/2, \pi/2]$ and $x \in \X = [-2, 2]$. Some trajectories are plotted in white. We observe that the sampled Fr\'echet derivative of the Lyapunov function is negative definite as expected, and the region of attraction is (inner-)estimated. The percentage of coverage of $\mathcal{R}(V_{\theta_1^*,\alpha^*})$ compared to $\mathcal{R}^*$ is $64.3\%$, which showcases the feasibility of the approach. Considering only the quadratic term in \eqref{eq:P_lyapu} ($\psi_\theta \equiv 0$) leads to an infeasible learning problem for that example, underscoring the method's interest.

\section{Conclusion and Future Work}
\label{sec:conclusion}
We proposed a learning framework for designing neural Lyapunov functions on $\SO{n}$. 
We formulated a class of neural Lyapunov functions on $\SO{n}$ and theoretically derived its approximation capabilities. A central technical contribution was the derivation of explicit, numerically tractable expressions for the derivative of the logarithmic map, enabling an efficient two-phase training procedure that balances computational cost and accuracy.
The results demonstrate the approach on an interpretable low-dimensional
tangent-bundle example. Future work will address computational speed-ups and
systematic validation on larger Lie-group systems.

\bibliographystyle{IEEEtran}
\bibliography{bib_adeel,bib_matth}

\end{document}